\newtheorem{theorem}{Theorem}[section]
\newtheorem{corollary}[theorem]{Corollary}
\newtheorem{problem}[theorem]{Problem}
\theoremstyle{definition}
\newtheorem{example}[theorem]{Example}
\begin{document}

\title[Symmetric polynomials and their inner automorphisms]
{Symmetric polynomials in Leibniz algebras \\ and their inner automorphisms}

\author[{\c S}ehmus F{\i}nd{\i}k, Zeynep \"Ozkurt]
{{\c S}ehmus F{\i}nd{\i}k and Zeynep \"Ozkurt}

\address{Department of Mathematics,
\c{C}ukurova University, 01330 Balcal\i,
 Adana, Turkey}
\email{sfindik@cu.edu.tr}
\email{zyapti@cu.edu.tr}
\thanks
{}

\subjclass[2010]{17A32; 17A36; 17A50; 17B01; 17B30.}
\keywords{Leibniz algebras; metabelian identity, automorphisms, symmetric polynomials.}

\begin{abstract}

Let $L_n$ be the free metabelian Leibniz algebra generated by the set $X_n=\{x_1,\ldots,x_n\}$
over a field $K$ of characteristic zero. This is the free algebra of rank $n$ in the variety of solvable of class $2$ Leibniz algebras.
We call an element $s(X_n)\in L_n$ {\it symmetric} if $s(x_{\sigma(1)},\ldots,x_{\sigma(n)})=s(x_1,\ldots,x_n)$
for each permutation $\sigma$ of $\{1,\ldots,n\}$. The set $L_n^{S_n}$ of symmetric polynomials
of $L_n$ is the algebra of invariants of the symmetric group $S_n$. 
Let $K[X_n]$ be the usual polynomial algebra with indeterminates from $X_n$.
The description of the algebra $K[X_n]^{S_n}$ is well known, and 
the algebra $(L_n')^{S_n}$ in the commutator ideal $L_n'$ is a right $K[X_n]^{S_n}$-module.
We give explicit forms of elements of the $K[X_n]^{S_n}$-module $(L_n')^{S_n}$.
Additionally, we determine the description of the group $\text{\rm Inn}(L_{n}^{S_n})$
of inner automorphisms of the algebra $L_n^{S_n}$. The findings can be considered as a generalization of
the recent results obtained for the free metabelian Lie algebra of rank $n$.

\end{abstract}

\maketitle

\section*{Introduction}

Hilbert's fourteen problem is one of those famous twenty three problems suggested by
German mathematician David Hilbert \cite{H} in 1900
at the Paris conference of the International Congress of Mathematicians, and it is
related with the finite generation of the algebra $K[X_n]^G$ of invariants of $G<GL_n(K)$,
where $K[X_n]=K[x_1,\ldots,x_n]$ is the usual polynomial algebra over a field $K$, and $GL_n(K)$ is the general linear group.
Nagata \cite{Na} showed that the problem is not true in general in 1959. Earlier in 1916, Noether \cite{N} solved the problem in affirmative
for finite groups. In particular let $G=S_n$ be the symmetric group acting on the algebra $K[X_n]$ by permuting the variables:
$\pi\cdot p(x_1,\ldots,x_n)=p(x_{\pi(1)},\ldots,x_{\pi(n)})$, $p\in K[X_n]$, $\pi\in S_n$. The algebra $K[X_n]^{S_n}$
is generated by the set $\{\sum_{i=1}^n x_i^k\mid k=1,\ldots,n\}$, by the fundamental theorem of symmetric polynomials.
Elementary symmetric polynomials $e_j=\sum x_{i_1}\cdots x_{i_j}$, $i_1<\cdots<i_j$, $ j=1,\ldots,n$, form another generating set.

A noncommutative analogue of the problem is the algebra $K\langle X_n\rangle^{S_n}$ of symmetric polynomials
in the free associative algebra $K\langle X_n\rangle$. One may see the works \cite{Wo,BRRZ,GKL} on the algebra $K\langle X_n\rangle^{S_n}$.
Another analogue is working in relatively free Lie algebras, which are not associative and commutative.
The algebras $F_n^{S_n}$, and $M_n^{S_n}$ are not finitely generated via \cite{Br} and \cite{Dr},
where $F_n$ and $M_n$ are the free Lie algebra and the free metabelian Lie algebra of rank $n$, respectively.
One may see the papers \cite{FO,DFO} for the explicit elements of the algebra $M_n^{S_n}$. See also \cite{FO2} for the inner automorphisms 
of $M_n^{S_n}$. 

We consider the Leibniz algebras which can be thought as a generalization of the Lie algebras.
Leibniz algebras are defined by the identity $[x,[y,z]]=[[x,y],z]-[[x,z],y]$, where the bracket
is bilinear; however, non-necessarily skew-symmetric. In the case of skew-symmetry the identity turns into the Jacobi identity, and we obtain a Lie algebra.
Leibniz algebras are related with many branches of mathematics. See the papers \cite{L,LP,MU,AO,O} for more details.

In the present study, we consider the free metabelian Leibniz algebra $L_n$ and we determine the algebra $L_n^{S_n}$ of symmetric polynomials.
Additionally, we describe the group $\text{\rm Inn}(L_n^{S_n})$ of inner automorphisms of $L_n^{S_n}$.

\section{Preliminaries}

Let $K$ be a field of characteristic zero. A Leibniz algebra $L$ over $K$
is a vector space furnished with bilinear commutator $[.,.]$ satisfying the
Leibniz identity
\[
[[x,y],z]=[[x,z],y]+[x,[y,z]],
\]
or
\[
[x,y]r_z=[xr_z,y]+[x,yr_z],
\]
$x,y,z\in L$. Here $r_z$ stands for the adjoint operator $\text{\rm ad}z$ acting from right side by commutator multiplication.
The Leibniz algebra $L$ is nonassociative and noncommutative. 
Leibniz algebras are generalizations of Lie algebras.
There is a natural way to obtain a Lie algebra from a given Leibniz algebra $L$.
Consider the ideal $\text{\rm Ann}(L)$, called the annihilator of $L$, generated by all elements $[x,x]$, $x\in L$.
It is well known, see e.g. \cite{LP}, that $r_a=0$ if and only if $a\in \text{\rm Ann}(L)$.
Clearly the elements
of the form $[x,y]+[y,x]\in\text{\rm Ann}(L)$ by the equality
\[
[x,y]+[y,x]=[x+y,x+y]-[x,x]-[y,y], \ \ x,y\in L.
\]
Hence the factor algebra $L/\text{\rm Ann}(L)$ turns into a Lie algebra,
since the anticommutativity $[x,y]+[y,x]=0$ and the Jacobi identity
\[
[[x,y],z]+[[y,z],x]+[[z,x],y]=0,
\]
are satisfied in $L/\text{\rm Ann}(L)$.

\

Now consider the free algebra $L_n$ of rank $n$ generated by $X_n=\{x_1,\ldots,x_n\}$ in the variety of metabelian Leibniz algebras over the base field $K$.
The algebra $L_n$ satisfies the metabelian identity $[[x,y],[z,t]]=0$, and is a solvable of class $2$ Leibniz algebra.
Hence every element in the commutator ideal $L_n'=[L_n,L_n]$
of the free metabelian Leibniz algebra $L_n$ can be expressed as a linear combination of left-normed monomials of the form
\begin{align}
[[\cdots[[x_{i_1},x_{i_2}],x_{i_3}],\ldots],x_{i_k}]&=[x_{i_1},x_{i_2},x_{i_3},\ldots,x_{i_k}]\nonumber\\
&=[x_{i_1},x_{i_2}]r_{i_3}\cdots r_{i_k}=[x_{i_1},x_{i_2}]r_{i_{\pi(3)}}\cdots r_{i_{\pi(k)}}\nonumber
\end{align}
where $\pi$ is a permutation of the set $\{3,\ldots,k\}$. In this way the commutator ideal $L_n'$ can be considered as a right $K[R_n]=K[r_1,\ldots,r_n]$-module,
where $r_i=r_{x_i}=\text{\rm ad}x_i$, $i=1,\ldots,n$. It is well known, see e.g. Proposition 3.1. of the paper \cite{DP}, that the elements 
\[
x_{i_1}, [x_{i_1},x_{i_2}], [x_{i_1},x_{i_2}]r_{i_3}\cdots r_{i_k}, \ \  1\leq i_1, i_2\leq n, \ \  1\leq i_3\leq\cdots\leq i_k,
\]
form a basis for $L_n$. The next result is a direct consequence of this basis.

\begin{corollary}\label{free}
The commutator ideal $L_n'$ of the free metabelian Leibniz algebra $L_n$ is a free right $K[R_n]$-module with generators
$[x_i,x_j]$, $1\leq i, j\leq n$.
\end{corollary}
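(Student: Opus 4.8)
The plan is to read the free-module structure directly off the displayed $K$-basis of $L_n$, so the argument splits into a spanning part and an independence part, with the only genuine bookkeeping being the dictionary between the monomials of $K[R_n]$ and the admissible tails $r_{i_3}\cdots r_{i_k}$.

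First I would recall why the right action of $K[R_n]$ on $L_n'$ is well defined and commutative, since this is what makes the canonical form legitimate. Applying the Leibniz identity to an element $[u,v]\in L_n'$ gives $[[u,v],y,z]=[[u,v],z,y]+[[u,v],[y,z]]$, and the last term vanishes by the metabelian identity $[[x,y],[z,t]]=0$; hence $r_y$ and $r_z$ commute on $L_n'$. This is exactly the remark recorded just above the corollary, namely $[x_{i_1},x_{i_2}]r_{i_3}\cdots r_{i_k}=[x_{i_1},x_{i_2}]r_{i_{\pi(3)}}\cdots r_{i_{\pi(k)}}$ for every permutation $\pi$. Consequently the action of $K[R_n]$ factors through the commutative polynomial algebra, and each monomial $r_1^{a_1}\cdots r_n^{a_n}$ of $K[R_n]$ corresponds bijectively to a unique non-decreasing tail $r_{i_3}\cdots r_{i_k}$.

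For spanning, every basis element of $L_n'$ in the cited list has the form $[x_i,x_j]r_{i_3}\cdots r_{i_k}$ with $i_3\le\cdots\le i_k$, that is $[x_i,x_j]\cdot m$ for the corresponding monomial $m\in K[R_n]$; hence the elements $[x_i,x_j]$, $1\le i,j\le n$, generate $L_n'$ as a right $K[R_n]$-module. For independence, I would suppose $\sum_{1\le i,j\le n}[x_i,x_j]f_{ij}=0$ with $f_{ij}\in K[R_n]$. Writing each $f_{ij}=\sum_m c^{(ij)}_m m$ as a $K$-combination of the monomials of $K[R_n]$ and re-expressing every $[x_i,x_j]m$ as the associated basis monomial, the relation becomes a $K$-linear dependence among pairwise distinct elements of the basis of $L_n$; therefore all $c^{(ij)}_m=0$, and so all $f_{ij}=0$.

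Since nothing is required beyond this identification, I do not expect a genuine obstacle; the only point to watch is not to double-count generators. The family $\{[x_i,x_j]\}$ ranges over all ordered pairs, so it includes $[x_i,x_j]$ and $[x_j,x_i]$ as distinct generators as well as the diagonal elements $[x_i,x_i]$, matching the range $1\le i_1,i_2\le n$ in the basis; it is precisely this matching that upgrades ``$K$-basis of $L_n$'' to ``free $K[R_n]$-basis of $L_n'$''.
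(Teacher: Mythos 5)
Your argument is correct and is exactly the intended one: the paper gives no separate proof, simply noting that the corollary is ``a direct consequence'' of the cited $K$-basis of $L_n$, and your spanning/independence reading of that basis (together with the commutativity of the $r_z$'s on $L_n'$ recorded just above) is precisely that deduction spelled out.
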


A polynomial $s=s(x_1,\ldots,x_n)$ in the free metabelian Leibniz algebra $L_n$ is said to be {\it symmetric} if
\[
\sigma s=s(x_{\sigma(1)},\ldots,x_{\sigma(n)})=s(x_1,\ldots,x_n), \ \ \sigma\in S_n.
\]
The set $L_n^{S_n}$ of symmetric polynomials forms a Leibniz subalgebra,
which is the algebra of invariants of the symmetric group $S_n$. The $K[R_n]$-module structure of the commutator ideal $L_n'$
implies that the algebra $(L_n')^{S_n}$ is a right $K[R_n]^{S_n}$-module. One of the set of generators of the algebra $K[R_n]^{S_n}$
of symmetric polynomials is well known: $\{r_1^k+\cdots+r_n^k\mid 1\leq k\leq n\}$, see e.g. \cite{S}.

\section{Main Results}

\subsection{Symmetric polynomials}

In this section we determine the algebra $L_n^{S_n}$ of symmetric polynomials in the free metabelian Leibniz algebra $L_n$.
Clearly the linear symmetric polynomials are included in the $K$-vector space spanned on a single element $x_1+\cdots+x_n$.
Hence it is sufficient to work in the commutator ideal $L_n'$ of $L_n$, and describe the algebra $(L_n')^{S_n}$.
Let us fix the notations $a_i=[x_i,x_i]$, $1\leq i\leq n$, and $b_{jk}=[x_j,x_k]$, $1\leq j\neq k\leq n$, 
which are the free generators of $K[R_n]$-module $L'$.
We provide explicit elements of $K[R_n]^{S_n}$-module $(L_n')^{S_n}$. For this purpose, we study in the
$K[R_n]$-submodules
\[
A_n=\left\{\sum_{i=1}^na_ip_i\mid p_i\in K[R_n]\right\} \ \ \text{\rm and} \ \ B_n=\left\{\sum_{1\leq j\neq k\leq n}b_{jk}q_{jk}\mid q_{jk}\in K[R_n]\right\}
\]
of the $K[R_n]$-module $L_n'=A_n\oplus B_n$, generated by $a_i$, $1\leq i\leq n$, and $b_{jk}$, respectively, $1\leq j\neq k\leq n$,
due to the fact that they are invariant under the action of $S_n$; i.e., $A_n^{S_n}\subset A_n$, and $B_n^{S_n}\subset B_n$.

Let us denote the subgroups $\Pi_i=\{\pi\in S_n\mid \pi(i)=i\}$, $1\leq i\leq n$, and $\Pi_{jk}=\{\pi\in S_n\mid \pi(j)=j, \ \pi(k)=k\}$, $1\leq j\neq k\leq n$, of $S_n$.
In the next theorems, we determine symmetric polynomials in the $K[R_n]$-modules $A_n$, and $B_n$, respectively.

\begin{theorem}\label{ii}
Let $p=\sum_{i=1}^na_{i}p_i$ be a polynomial in $A_n$, for some $p_i\in K[R_n]$, $1\leq i\leq n$.
Then $p$  is symmetric if and only if
\[
p_1(r_1,r_2,\ldots,r_n)=\pi p_1(r_1,r_2,\ldots,r_n)=p_1(r_1,r_{\pi(2)},\ldots,r_{\pi(n)}), \ \ \pi\in\Pi_1,
\]
$\sigma p_i=p_i$, $\sigma\in\Pi_i$, and $p_i=(1i)p_1=p_1(r_i,r_2,\ldots,r_{i-1},r_1,r_{i+1},\ldots,r_n)$, for transpositions $(1i)\in S_n$, $i=2,\ldots,n$.
\end{theorem}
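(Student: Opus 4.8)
The plan is to convert the single invariance equation $\sigma p=p$ into a family of componentwise identities in the free module $A_n$, and then to show that this family is controlled entirely by the first coefficient $p_1$. First I would record the action of $S_n$ on $A_n$. Since the action is induced by the automorphism sending $x_j$ to $x_{\sigma(j)}$, one has $\sigma a_i=a_{\sigma(i)}$ and right multiplication by $r_j$ is carried to $r_{\sigma(j)}$, so that $\sigma(a_ip_i)=a_{\sigma(i)}(\sigma p_i)$, where $\sigma p_i=p_i(r_{\sigma(1)},\ldots,r_{\sigma(n)})$. Re-indexing the summation yields $\sigma p=\sum_{i=1}^n a_i(\sigma p_{\sigma^{-1}(i)})$. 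By Corollary \ref{free} the elements $a_1,\ldots,a_n$ are free generators of the $K[R_n]$-module $A_n$, so comparing coefficients shows that $p$ is symmetric if and only if
\[
p_{\sigma(j)}=\sigma p_j,\qquad \sigma\in S_n,\ 1\le j\le n.
\]
This reformulation is the engine of the whole argument.

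For necessity I would specialize the displayed relation. Taking $j=1$ and $\sigma\in\Pi_1$ gives $p_1=\sigma p_1$, which is the first condition; taking $j=1$ and $\sigma=(1i)$ gives $p_i=(1i)p_1$, which is the third; and taking $j=i$ with $\sigma\in\Pi_i$ gives $\sigma p_i=p_i$, the second. Thus symmetry forces all three stated conditions.

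For sufficiency I would show that the first and third conditions already imply the full relation, the second then being automatic. Adopting the convention $(11)=\mathrm{id}$, the third condition reads $p_j=(1j)p_1$ for every $j$. To verify $p_{\sigma(j)}=\sigma p_j$ it therefore suffices to check $(1\,\sigma(j))p_1=\sigma(1j)p_1$, equivalently $\tau p_1=p_1$ with $\tau=(1\,\sigma(j))\,\sigma\,(1j)$. A direct evaluation shows $\tau(1)=1$ in every case, so $\tau\in\Pi_1$ and the first condition gives $\tau p_1=p_1$. I expect the main obstacle to be this last bookkeeping: one must keep straight that the substitution is a left action, so that compositions such as $\sigma(1j)$ multiply in the correct order, and verify the fixed-point computation separately for $j=1$ and $j\neq 1$. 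Once the reduction to $\Pi_1$-invariance of $p_1$ is in place, the equivalence follows.
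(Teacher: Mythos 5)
Your proof is correct and follows essentially the same route as the paper: both reduce symmetry to a coefficient-wise identity via the freeness of the $K[R_n]$-module $A_n$ (Corollary \ref{free}), obtain necessity by specializing to $\Pi_1$, $\Pi_i$, and the transpositions $(1i)$, and then verify sufficiency by a direct computation. The only cosmetic difference is that the paper checks invariance of $p$ only under the generating transpositions $(1k)$, whereas you verify the relation $p_{\sigma(j)}=\sigma p_j$ for arbitrary $\sigma$ via the observation that $(1\,\sigma(j))\,\sigma\,(1j)\in\Pi_1$; both versions are sound.
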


\begin{proof}
Let $p\in A_n$ be an element of the form
\[
p=\sum_{i=1}^na_{i}p_i(r_1,\ldots,r_n), \ \ p_i\in K[R_n].
\]
If $p$ is a symmetric polynomial, then $p=\pi p$; i.e.,
\[
\sum_{i=1}^na_{i}p_i(r_1,\ldots,r_n)=\sum_{i=1}^na_{\pi(i)}p_i(r_{\pi(1)},\ldots,r_{\pi(n)})
\]
for each $\pi\in S_n$ by definition, and by Corollary \ref{free} we may compare the coefficients of $a_i$, $i=1,\ldots,n$,
from $K[R_n]$, in the last equality.
In particular, $p=\pi p$ for each $\pi\in\Pi_i$, and comparing the coefficients of $a_i$, $i=1,\ldots,n$, we obtain that
\[
p_i(r_1,\ldots,r_n)=p_i(r_{\pi(1)},\ldots,r_{\pi(i-1)},r_i,r_{\pi(i+1)},\ldots,r_{\pi(n)}).
\]
Now consider $(1i)p=p$ for every transposition $(1i)\in S_n$, $i=2,\ldots,n$. Then these equalities give that
$p_i=(1i)p_1$, and thus
\begin{align}
p_i(r_1,\ldots,r_n)&=(1i)p_1(r_1,\ldots,r_n)\nonumber\\
&=p_1(r_i,r_2,\ldots,r_{i-1},r_1,r_{i+1},\ldots,r_n)\nonumber
\end{align}
Conversely consider the element $p=\sum_{i=1}^na_ip_i$ satisfying the conditions in the theorem.
It is sufficient to show that $(1k)p=p$, $k=2,\ldots,n$, since these transpositions generate the symmetric group $S_n$.
Note that if $i\neq 1,k$, then $(1k)p_i=p_i$, since $(1k)\in\Pi_i$.
The following computations complete the proof.
\begin{align}
(1k)p=&(1k)\left(a_1p_1+a_kp_k+\sum_{i\neq 1,k}a_ip_i\right)\nonumber\\
&=a_k((1k)p_1)+a_1((1k)p_k)+\sum_{i\neq 1,k}a_i((1k)p_i)\nonumber\\
&=a_kp_k+a_1p_1+\sum_{i\neq 1,k}a_ip_i=p.\nonumber
\end{align}
\end{proof}

\begin{theorem}\label{ij}
Let $q=\sum b_{ij}q_{ij}$ be a polynomial in $B_n$, for some $q_{ij}\in K[R_n]$, $1\leq i\neq j\leq n$.
Then $q$  is symmetric if and only if $q_{ij}=\sigma q_{kl}$ for every $\sigma:i\to k,j\to l$, in particular,
\begin{align}
q_{1i}=&(2i)q_{12},\ \ q_{i2}=(1i)q_{12}, \ \ q_{2i}=(1i)q_{21}, \ \ q_{i1}=(2i)q_{21}, \nonumber\\
&q_{21}=(12)q_{12}, \ \ q_{ij}=(1i)(2j)q_{12}, \ \ 3\leq i\neq j\leq n.\nonumber
\end{align}
and $q_{ij}=\pi q_{ij}$, for all $\pi\in\Pi_{ij}$. 
\end{theorem}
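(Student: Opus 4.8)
The plan is to mirror the structure of the proof of Theorem~\ref{ii}, working in the free $K[R_n]$-module $B_n$ with basis $\{b_{ij}\mid 1\le i\neq j\le n\}$. By Corollary~\ref{free}, these basis elements are $K[R_n]$-independent, so the defining symmetry condition $q=\pi q$ for all $\pi\in S_n$ can be analysed coefficientwise. First I would record how $S_n$ permutes the basis: since $b_{ij}=[x_i,x_j]$, we have $\pi b_{ij}=b_{\pi(i)\pi(j)}$, and $\pi$ sends the coefficient polynomial $q_{ij}(r_1,\ldots,r_n)$ to $q_{ij}(r_{\pi(1)},\ldots,r_{\pi(n)})=\pi q_{ij}$. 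Thus
\begin{align}
q=\pi q \iff \sum_{i\neq j} b_{ij}q_{ij}=\sum_{i\neq j} b_{\pi(i)\pi(j)}(\pi q_{ij}),\nonumber
\end{align}
and reindexing the right-hand side by $(k,l)=(\pi(i),\pi(j))$ lets me compare the coefficient of each $b_{kl}$.

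Next I would extract the two families of conditions. Taking a general $\pi$ with $\pi(i)=k$, $\pi(j)=l$ and comparing the coefficient of $b_{kl}$ yields exactly $q_{kl}=\pi q_{ij}$, i.e. $q_{ij}=\sigma q_{kl}$ whenever $\sigma:i\mapsto k,\ j\mapsto l$; this is the orbit relation tying all coefficients together. The in-place conditions $q_{ij}=\pi q_{ij}$ for $\pi\in\Pi_{ij}$ come from the subgroup $\Pi_{ij}$ fixing the ordered pair $(i,j)$, which maps $b_{ij}$ to itself. The listed special cases (for instance $q_{1i}=(2i)q_{12}$, $q_{21}=(12)q_{12}$, and $q_{ij}=(1i)(2j)q_{12}$ for distinct $i,j\ge 3$) are then just the orbit relation applied to the particular transpositions or products carrying the ordered pair $(1,2)$ to the target pair; I would verify one or two of these explicitly and note the rest are identical in form, observing that $(1,2)$ and $(2,1)$ generate the two $S_n$-orbits on ordered pairs of distinct indices (in fact a single orbit, since $(12)$ swaps them).

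For the converse I would follow the template of Theorem~\ref{ii}: it suffices to check $(1k)q=q$ for each transposition $(1k)$, since these generate $S_n$. Applying $(1k)$ permutes the basis elements $b_{ij}$ among themselves and correspondingly permutes the coefficients; using the assumed orbit relations $q_{ij}=\sigma q_{kl}$ one checks that each coefficient of $b_{kl}$ on the left reproduces the prescribed coefficient on the right. The $\Pi_{ij}$-invariance guarantees that when $(1k)$ fixes an ordered pair the coefficient is genuinely unchanged, so no inconsistency arises.

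The main obstacle is bookkeeping rather than conceptual: unlike the $a_i$ case, where each index sits in a single slot, the pairs $(i,j)$ split into several cases according to how $\{i,j\}$ meets $\{1,k\}$ (namely $i,j\notin\{1,k\}$; exactly one of $i,j$ in $\{1,k\}$; and $\{i,j\}=\{1,k\}$), and each case routes the transposition $(1k)$ through a different composite acting on $q_{12}$ or $q_{21}$. I would organize the converse verification by these cases, and the care needed is confirming that the two base coefficients $q_{12},q_{21}$ (linked by $q_{21}=(12)q_{12}$) together with their $\Pi_{12}$-, $\Pi_{21}$-invariance propagate consistently to every $q_{kl}$ without overdetermining the system.
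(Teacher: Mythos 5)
Your proposal is correct and follows essentially the same route as the paper: both directions rest on Corollary~\ref{free} to compare coefficients of the free generators $b_{ij}$, the forward implication extracts the orbit relations and the $\Pi_{ij}$-invariance from $q=\pi q$, and the converse reduces to checking $(1k)q=q$ for the generating transpositions with exactly the case split (pairs meeting $\{1,k\}$ in zero, one, or two indices) that the paper's displayed decomposition of $q$ encodes. The only cosmetic difference is that you phrase the forward direction via a general $\sigma$ carrying $(i,j)$ to $(k,l)$, while the paper instantiates the specific permutations $(12),(1i),(2i),(1i)(2j),(ij)$; since all of these are involutions, your reading of ``$\sigma:i\to k,\ j\to l$'' agrees with theirs.
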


\begin{proof}
Assume that a polynomial $q=\sum b_{ij}q_{ij}\in B_n$, $1\leq i\neq j\leq n$,
is symmetric. Then $\pi q=q$ for each $\pi\in\Pi_{12}$ gives that
\[
q_{12}(r_1,r_2,\ldots,r_n)=q_{12}(r_1,r_2,r_{\pi(3)},\ldots,r_{\pi(n)}).
\]
Relations on $q_{ij}$'s in the theorem are straightforward, by making use of Corollary \ref{free},
and comparing the coefficients of $b_{12}$, $b_{21}$, and $b_{ij}$ from the equalities
$q=(12)q=(1i)q=(2i)q=(1i)(2j)q=(ij)q$, where $3\leq i\neq j\leq n$.

Now let the polynomial $q\in B_n$ satisfy the conditions of the theorem, and $(1k)\in S_n$ be a transposition
for a fixed $k\in\{3,\dots,n\}$. We have to show that $(1k)q=q$.
Let express $q$ in the following form.
\[
q=b_{1k}q_{1k}+b_{k1}q_{k1}+\sum_{i,j\neq1,k}b_{ij}q_{ij}+\sum_{i\neq 1,k}(b_{1i}q_{1i}+b_{ki}q_{ki}+b_{ik}q_{ik}+b_{i1}q_{i1}).
\]
Note that $(1k)\in\Pi_{ij}$, and hence $(1k)q_{ij}=q_{ij}$, for $i,j\neq1,k$. Then we have that
\begin{align}
(1k)q=&b_{k1}((1k)q_{1k})+b_{1k}((1k)q_{k1})+\sum_{i,j\neq1,k}b_{ij}((1k)q_{ij})\nonumber\\
&+\sum_{i\neq 1,k}(b_{ki}((1k)q_{1i})+b_{1i}((1k)q_{ki})+b_{i1}((1k)q_{ik})+b_{ik}((1k)q_{i1}))\nonumber\\
=&b_{k1}q_{k1}+b_{1k}q_{1k}+\sum_{i,j\neq1,k}b_{ij}q_{ij}+\sum_{i\neq 1,k}(b_{ki}q_{ki}+b_{1i}q_{1i}+b_{i1}q_{i1}+b_{ik}q_{ik})=q.\nonumber
\end{align}
\end{proof}

We obtain the next corollary by combining Theorem \ref{ii} and Theorem \ref{ij}.

\begin{corollary}
If $s$ is a symmetric polynomial in the free metabelian Leibniz algebra $L_n$, then it is of the form
\begin{align}
s=&\sum_{1\leq i\leq n}\alpha x_i+\sum_{1\leq i\leq n}[x_i,x_i]((1i)f)+\sum_{3\leq i\neq j\leq n}[x_i,x_j]((1i)(2j)g)\nonumber\\
&+[x_1,x_2]g+\sum_{3\leq i\leq n}([x_1,x_i]((2i)g)+[x_i,x_2]((1i)g)\nonumber\\
&+[x_2,x_1]h+\sum_{3\leq i\leq n}([x_i,x_1]((2i)h)+[x_2,x_i]((1i)h)),\nonumber
\end{align}
where $\alpha\in K$, $f,g,h\in K[R_n]$, such that $\pi f=f$ for $\pi\in\Pi_{1}$, $\sigma g=g$ for $\sigma\in\Pi_{12}$, and $h=(12)g$.
\end{corollary}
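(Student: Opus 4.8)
The plan is to reduce the statement to the two theorems just proved, by splitting an arbitrary symmetric polynomial into homogeneous components and then into its $A_n$- and $B_n$-parts, and finally translating the coefficient relations of Theorem~\ref{ij} into the orbit-representative form displayed above.

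First I would use that the $S_n$-action merely permutes the generators $X_n$ and hence preserves degree, so every homogeneous component of a symmetric polynomial is itself symmetric. Writing $s=s_1+s'$ with $s_1$ the linear part and $s'\in L_n'$, symmetry of $s_1$ forces it to be a scalar multiple of $x_1+\cdots+x_n$, as recorded at the start of the section; thus $s_1=\sum_{1\leq i\leq n}\alpha x_i$ for some $\alpha\in K$.

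Next I would analyse $s'\in(L_n')^{S_n}$. Since $L_n'=A_n\oplus B_n$ with both summands $S_n$-invariant, the projections along this decomposition commute with the action: writing $s'=p+q$ with $p\in A_n$, $q\in B_n$, the identity $\sigma p+\sigma q=p+q$ together with $\sigma p\in A_n$, $\sigma q\in B_n$ and directness of the sum gives $\sigma p=p$ and $\sigma q=q$, so $p\in A_n^{S_n}$ and $q\in B_n^{S_n}$. I may then apply Theorem~\ref{ii} and Theorem~\ref{ij} separately. For $p$, setting $f=p_1$ makes the coefficient of $[x_i,x_i]$ equal to $p_i=(1i)f$ with $f$ invariant under $\Pi_1$, producing $\sum_{1\leq i\leq n}[x_i,x_i]((1i)f)$. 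For $q$, setting $g=q_{12}$ (which is $\Pi_{12}$-invariant) and $h=q_{21}=(12)g$, it remains to group the ordered pairs $(i,j)$, $i\neq j$, into the base pairs $(1,2),(2,1)$, the pairs meeting $\{1,2\}$ in exactly one coordinate, and the pairs with $i,j\geq3$, and to read off each coefficient from the relations $q_{1i}=(2i)g$, $q_{i2}=(1i)g$, $q_{2i}=(1i)h$, $q_{i1}=(2i)h$, and $q_{ij}=(1i)(2j)g$. Summing $s_1+p+q$ then yields exactly the displayed expression.

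I expect no genuine obstacle here, since both structural theorems are already available; the one point requiring care is the correct organization of the $S_n$-orbit of the index pairs $(i,j)$ and the faithful substitution of the relations of Theorem~\ref{ij}, so that each monomial $[x_i,x_j]$ receives precisely the coefficient prescribed by $g$ and $h$.
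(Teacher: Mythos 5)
Your proposal is correct and follows exactly the route the paper intends: the paper gives no separate proof of this corollary, stating only that it follows by combining Theorem~\ref{ii} and Theorem~\ref{ij}, and your argument (split off the linear part, decompose the commutator part along the $S_n$-invariant direct sum $A_n\oplus B_n$, set $f=p_1$, $g=q_{12}$, $h=q_{21}=(12)g$, and substitute the coefficient relations) is precisely the omitted verification. The coefficient bookkeeping you describe matches the displayed formula term by term.
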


\begin{example}
Let $n=2$ and the free metabelian Leibniz algebra $L_2$ be generated by $x_1,x_2$.
Then each symmetric polynomial $s\in L_2^{S_2}$ is of the form 
\begin{align}
s=&\alpha(x_1+x_2)+[x_1,x_1]f(r_1,r_2)+[x_2,x_2]f(r_2,r_1)\nonumber\\
&+[x_1,x_2]g(r_1,r_2)+[x_2,x_1]g(r_2,r_1),\nonumber
\end{align}
where $\alpha\in K$, $f,g\in K[R_2]$. Note that the Lie correspondence of this result (modulo the annihilator)
is that if $s(x_1,x_2)$ is a symmetric polynomial in the free metabelian Lie algebra generated by $x_1,x_2$, then
\[
s=\alpha(x_1+x_2)+[x_1,x_2]t(r_1,r_2),
\]
such that $t(r_1,r_2)=-t(r_2,r_1)$, which is compatible with the recent result given in \cite{FO}.
\end{example}

\subsection{Inner automorphisms}

Let $u$ be an element in the commutator ideal $L_n'$ of the free metabelian Leibniz algebra $L_n$. The adjoint operator
\[
\text{\rm ad}u:L_n\to L_n, \ \ \text{\rm ad}u(v)=[v,u], \ \ v\in L_n
\]
is nilpotent since $\text{\rm ad}^2u=0$, and that $\psi_u=\exp(\text{\rm ad}u)=1+\text{\rm ad}u$
is called an inner automorphism of $L_n$ with inverse $\psi_{-u}$. Clearly the group $\text{\rm Inn}(L_n)$
consisting of all inner automorphisms is abelian due to the fact that $\psi_{u_1}\psi_{u_2}=\psi_{u_1+u_2}$.

In the next theorem we determine the group $\text{\rm Inn}(L_n^{S_n})$ of inner automorphisms preserving symmetric polynomials.

\begin{theorem}
$\text{\rm Inn}(L_n^{S_n})=\{\psi_{u_1+u_2}\mid  u_1\in \text{\rm Ann}(L_n),  \ u_2\in (L_n')^{S_n}\}$.
\end{theorem}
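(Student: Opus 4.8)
The plan is to turn the statement into a concrete condition on the generator $u\in L_n'$ and then to analyse the operator $\operatorname{ad}x$, where $x=x_1+\cdots+x_n$. Every symmetric polynomial can be written as $s=\alpha x+s'$ with $\alpha\in K$ and $s'\in (L_n')^{S_n}$, by the Corollary following Theorem~\ref{ij}. Since $[s',u]\in[L_n',L_n']=0$ by the metabelian identity whenever $u\in L_n'$, I would first compute
\[
\psi_u(s)=s+[s,u]=s+\alpha[x,u].
\]
Thus $\psi_u$ preserves $L_n^{S_n}$ exactly when $[x,u]\in L_n^{S_n}$, and as $[x,u]\in L_n'$ this means $[x,u]\in (L_n')^{S_n}$. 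So the theorem is equivalent to the identity of generator sets
\[
\{u\in L_n'\mid [x,u]\in (L_n')^{S_n}\}=\operatorname{Ann}(L_n)+(L_n')^{S_n}.
\]

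The inclusion ``$\supseteq$'' is the easy half: for $u_1\in\operatorname{Ann}(L_n)$ one has $[x,u_1]=0$ because $r_{u_1}=0$, while for $u_2\in (L_n')^{S_n}$ the element $[x,u_2]$ is symmetric since $\sigma[x,u_2]=[\sigma x,\sigma u_2]=[x,u_2]$. For the reverse inclusion I would use averaging: given $u$ with $[x,u]$ symmetric, set $u_2=\frac1{n!}\sum_{\sigma\in S_n}\sigma u\in (L_n')^{S_n}$. Equivariance of $\operatorname{ad}x$ (again $\sigma x=x$) together with the symmetry of $[x,u]$ gives $[x,u_2]=[x,u]$, so that $u_1:=u-u_2$ lies in the kernel $N=\{w\in L_n'\mid[x,w]=0\}$. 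Everything then rests on the key lemma $N=\operatorname{Ann}(L_n)$, which lets me conclude $u_1\in\operatorname{Ann}(L_n)$ and hence that $u=u_1+u_2$ has the required form.

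I expect the computation of $N$ to be the main obstacle. Using Corollary~\ref{free} I would expand $u=\sum_i a_ip_i+\sum_{j\neq k}b_{jk}q_{jk}$; the $a$-part lies in $\operatorname{Ann}(L_n)$ and is killed by $\operatorname{ad}x$, so only the $b$-part matters. From the Leibniz identity, and the fact that $[x_l,wr_m]=[x_l,w]r_m$ for $w\in L_n'$ (the correction term $[[x_l,x_m],w]$ vanishing by metabelianness), I would obtain, with $c_j:=[x,x_j]=a_j+\sum_{l\neq j}b_{lj}$,
\[
[x,b_{jk}]=c_jr_k-c_kr_j,\qquad [x,u]=\sum_{j}c_j e_j,\quad e_j:=\sum_{k\neq j}r_k\,(q_{jk}-q_{kj}).
\]
Because $\{c_1,\dots,c_n\}\cup\{b_{pq}\}$ is again a free $K[R_n]$-basis of $L_n'$ (the change of basis from $\{a_i\}\cup\{b_{pq}\}$ is unipotent), the vanishing of $[x,u]$ forces $e_j=0$ for all $j$.

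The same coefficients $e_j$ reappear in the individual brackets: writing $\beta_{lj}=[x_l,x_j]$ (so $\beta_{ll}=a_l$), one finds $[x_l,u]=\sum_j\beta_{lj}e_j=a_le_l+\sum_{j\neq l}b_{lj}e_j$. Hence $e_j=0$ for all $j$ yields $[x_l,u]=0$ for every $l$, that is $r_u=0$ and $u\in\operatorname{Ann}(L_n)$; the opposite inclusion $\operatorname{Ann}(L_n)\subseteq N$ is immediate since $r_w=0$ implies $[x,w]=0$. This establishes $N=\operatorname{Ann}(L_n)$ and completes the argument. The only delicate point is the bookkeeping of coefficients when $l$ coincides with $j$ or $k$ in $[x_l,b_{jk}]$, where $b_{ll}$ must be read as $a_l$; once that is handled, both $[x,u]=0$ and $r_u=0$ collapse to the single system $e_j=0$, which is exactly what makes $N$ equal to the annihilator.
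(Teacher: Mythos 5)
Your proof is correct, and at the top level it follows the same strategy as the paper: reduce the condition that $\psi_u$ preserves $L_n^{S_n}$ to the single requirement that $[x_1+\cdots+x_n,u]$ be symmetric, and then analyse $\text{\rm ad}(x_1+\cdots+x_n)$ on $L_n'$. The genuine difference lies in how the converse is finished. The paper compares $[v_l,u_2]$ with $[v_l,\pi u_2]$ and passes from $[x_1+\cdots+x_n,\,u_2-\pi u_2]=0$ directly to $u_2-\pi u_2=0$; as stated that implication is false (take $w=[x_1,x_1]$, which is killed by every $\text{\rm ad}$ but is nonzero), and it can only be rescued by controlling the kernel of $\text{\rm ad}(x_1+\cdots+x_n)$ on $L_n'$ relative to a complement of $\text{\rm Ann}(L_n)$ --- a point the paper leaves implicit. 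You isolate exactly this as the lemma $\{w\in L_n'\mid [x,w]=0\}=\text{\rm Ann}(L_n)$ and prove it via the $e_j$-computation together with the unipotent change of basis from $\{a_i\}\cup\{b_{pq}\}$ to $\{c_j\}\cup\{b_{pq}\}$; you also replace the paper's direct permutation comparison by an averaging argument, legitimate in characteristic zero, which sidesteps any choice of complement. Your computation checks out: $[x_l,wr_m]=[x_l,w]r_m$ for $w\in L_n'$ follows from the Leibniz and metabelian identities, $[x_l,u]=\sum_j\beta_{lj}e_j$ and $[x,u]=\sum_j c_je_j$ are correct after the index swap, and the observation that $[x,u]=0$ and $r_u=0$ collapse to the same system $e_j=0$ is precisely what identifies the kernel with the annihilator. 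In short, your route proves the same theorem but supplies the one nontrivial lemma the paper asserts without proof, at the cost of a longer (but elementary) module-theoretic computation.
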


\begin{proof}
Let $v\in L_n^{S_n}$, $u=u_1+u_2$, for some $u_1\in \text{\rm Ann}(L_n)$, and $u_2\in (L_n')^{S_n}$. Then clearly
\[
\psi_u(v)=v+[v,u_1+u_2]=v+[v,u_2]\in L_n^{S_n}.
\]
Conversely, let $\psi_u(v)\in L_n^{S_n}$ for $v\in L_n^{S_n}$, and $u\in L_n'$. The action of $\psi_u$ is identical when $v\in L_n'$.
Hence we assume that the linear counterpart $v_l=\alpha(x_1+\cdots+x_n)$, $\alpha\in K$, of $v$ is nonzero.
We may express $u=u_1+u_2$, $u_1\in \text{\rm Ann}(L_n)$, $u_2\in L_n'$, where $u_2\notin \text{\rm Ann}(L_n)$.
Hence we have $\psi_u(v)\in L_n^{S_n}$, which implies that $[v_l,u_2]$ is a symmetric polynomial. Let $\pi\in S_n$
be an arbitrary permutation. Then
\[
[v_l,u_2]=\pi[v_l,u_2]=[\pi v_l,\pi u_2]=[v_l,\pi u_2]
\]
or $[x_1+\cdots+x_n,u_2-\pi u_2]=0$, and thus $u_2-\pi u_2=0$. Therefore $u_2\in (L_n')^{S_n}$.
\end{proof}

\

We complete the paper by releasing the following problem.

\begin{problem}
Determine the group $\text{\rm Aut}(L_n^{S_n})$ of all automorphisms preserving the symmetric polynomials.
\end{problem}

\end{document}